\newtheorem{Thm}{Theorem}[section]
\newtheorem{Prop}[Thm]{Proposition}
\newtheorem{Lem}[Thm]{Lemma}
\newtheorem{Cor}[Thm]{Corollary}
\newtheorem{Prob}[Thm]{Problem}
\theoremstyle{definition}
\newtheorem{Rem}[Thm]{Remark}
\newcommand{\vect}[1]{\boldsymbol{#1}}
\title{Planar lattices and equilateral odd-gons}
\author{Akira Iino}
\author{Masashi Sakiyama}
\thanks{This version incorporates the corrections published in Yokohama Math.~J. {\bf 71}, 61--62 (2025). The main results of the paper remain unchanged.}
\subjclass{52C05, 11H06, 51M04}
\keywords{planar lattice, integral lattice, equilateral polygon}
\date{\today}
\address{Nippon Hyoron Sha, Co., Ltd., 3-12-4 Minami-Otsuka, Toshima-ku, Tokyo, 170-8474, Japan}
\email{iino@nippyo.co.jp}
\address{The Kaisei Academy, 4-2-4 Nishi-Nippori, Arakawa-ku, Tokyo, 116-0013, Japan}
\email{sakiyama-ms@kaiseigakuen.jp}
\begin{document}
\maketitle
\begin{abstract}
For a planar integral lattice $L$, let $\nu(L)$ denote the square-free part of the integer $D(L)^2$, where 
$D(L)$ stands for the area of a fundamental parallelogram of $L$. 
For each odd integer $n$ with $3 \leq n<29$, 
a planar lattice $L$ contains an equilateral $n$-gon if and only if
$L$ is similar to an integral lattice $L'$ such that $\nu(L')\equiv 3 \pmod 4$ and the largest prime factor $p$ of $\nu(L')$ satisfies $p \leq n$. 
Moreover, such $L$ contains a convex equilateral $n$-gon, which answers a problem posed by Maehara.  
\end{abstract}

\section{Introduction}\label{introduction}
In 1973, Ball~\cite{Ball} considered the existence of equilateral polygons with vertices on the square lattice and
proved the following.

\begin{Thm}[{\cite[Theorems 1 and 6]{Ball}}]\label{Ball}
The square lattice $\mathbb{Z}^2$ does not contain an equilateral $n$-gon if $n$ is odd. 
The lattice $\mathbb{Z}^2$ contains a convex equilateral $n$-gon if $n$ is even.
\end{Thm}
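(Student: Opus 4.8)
The plan is to treat both assertions through the language of edge vectors. An equilateral $n$-gon with vertices in $\mathbb{Z}^2$ is determined, up to translation, by its sequence of edge vectors $v_1,\dots,v_n\in\mathbb{Z}^2$; these satisfy $|v_1|^2=\cdots=|v_n|^2=s$ for a common value $s$ and close up, $v_1+\cdots+v_n=0$. The polygon is convex precisely when the $v_i$ can be arranged so that their polar angles strictly increase through one full turn. Thus the non-existence part amounts to showing that $n$ equal-length lattice vectors summing to zero force $n$ to be even, and the existence part amounts to exhibiting, for even $n$, such a family whose directions are all distinct.

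For the odd case I would argue by parity after a descent on $s$. If $4\mid s$, then writing $v_i=(x_i,y_i)$ the congruence $x_i^2+y_i^2\equiv 0\pmod 4$ forces both $x_i$ and $y_i$ even (squares are $0$ or $1$ mod $4$); dividing every $v_i$ by $2$ replaces $s$ by $s/4$ and preserves all hypotheses, so I may assume $4\nmid s$. Since a sum of two squares is never $\equiv 3\pmod 4$, this leaves $s\equiv 1$ or $s\equiv 2\pmod 4$. If $s$ is odd, each $v_i$ has exactly one odd coordinate, so $x_i+y_i$ is odd for every $i$, and reducing $0=\sum_i(x_i+y_i)$ modulo $2$ gives $n\equiv 0\pmod 2$. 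If $s\equiv 2\pmod 4$, then both coordinates of each $v_i$ are odd, and $0=\sum_i x_i$ forces $n$ even in the same way. Either way $n$ is even, which is the contrapositive of the first statement.

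For the even case, write $n=2m$ and build a centrally symmetric polygon. First I would secure $m$ lattice vectors $v_1,\dots,v_m$ of a common length with pairwise distinct directions: choosing $s$ to be a product of sufficiently many distinct primes $\equiv 1\pmod 4$ makes the number of lattice points on the circle $x^2+y^2=s$ as large as desired, so $m$ directions (say with polar angle in $[0,\pi)$) are available. Taking $v_1,\dots,v_m,-v_1,\dots,-v_m$ and sorting them by polar angle yields $2m$ edge vectors whose angles strictly increase around the circle; the partial sums then trace a convex $2m$-gon, it closes because $\sum_i(v_i+(-v_i))=0$, and it is equilateral because all the vectors share the length $\sqrt{s}$.

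I expect the only real obstacle to lie in the existence half, specifically in guaranteeing enough \emph{distinct} directions on a single circle and in checking strict convexity of the sorted arrangement; both reduce to the classical count of representations of $s$ as a sum of two squares, which grows with the number of prime factors $\equiv 1\pmod 4$. By contrast the odd case is entirely elementary, the descent on powers of $2$ and the mod $2$ reduction of the closure relation being the whole content.
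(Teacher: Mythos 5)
Your argument is correct and complete. Note, however, that the paper itself gives no proof of this statement: Theorem~\ref{Ball} is quoted from Ball's 1973 article and used as a black box, so there is no internal proof to compare against. Both halves of your write-up are sound. For odd $n$, the $2$-adic descent (dividing all edge-vectors by $2$ whenever $4\mid s$) followed by the observation that $s\equiv 1$ or $2\pmod 4$ forces each edge-vector to have an odd coordinate sum (resp.\ odd first coordinate), so that reducing the closure relation $\sum_i v_i=\vect{0}$ modulo $2$ yields $n\equiv 0\pmod 2$, is exactly the classical argument; it is also structurally the same minimality-plus-divisibility device the authors deploy in their proof of Theorem~\ref{prime}, where a minimal integer side length is shown not to be divisible by $p$ and the closure relation is then reduced modulo $p$. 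For even $n=2m$, your centrally symmetric construction $v_1,\dots,v_m,-v_1,\dots,-v_m$ sorted by polar angle produces $2m$ distinct equal-length lattice vectors with sum $\vect{0}$, which is precisely the criterion of Lemma~\ref{convex}; the only point requiring care, the supply of $m$ distinct directions on a single circle $x^2+y^2=s$, is settled as you say by taking $s$ a product of enough distinct primes congruent to $1\pmod 4$, since then $r_2(s)=4\cdot 2^k$ and half of these points have polar angle in $[0,\pi)$. So your proof is a valid, self-contained replacement for the cited result, obtained by elementary means consistent with the techniques the paper uses elsewhere.
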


Maehara~\cite{Educ, Maehara} considered the existence of equilateral polygons with vertices on a general planar lattice. 
He showed many interesting results, some of which are listed below. (The definitions of some terms will be explained in the next section.)

\begin{Thm}[{\cite[Theorem 5.1]{Maehara}}]\label{integral}
A planar lattice $L$ contains a convex equilateral $n$-gon for some $n\neq 4$ if and only if 
$L$ is similar to an integral lattice.
\end{Thm}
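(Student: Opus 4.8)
The plan is to prove both implications through a single linear-algebraic device attached to the edge vectors. Throughout I use that both ``being similar to an integral lattice'' and ``containing a convex equilateral $n$-gon'' are invariant under similarity. Fix a $\mathbb{Z}$-basis $f_1,f_2$ of $L$ and record the Gram data $A=\langle f_1,f_1\rangle$, $B=\langle f_1,f_2\rangle$, $C=\langle f_2,f_2\rangle$, so a lattice vector $v=pf_1+qf_2$ has $|v|^2=Ap^2+2Bpq+Cq^2$. The key object is the quadratic Veronese map $\phi(p,q)=(p^2,2pq,q^2)\in\mathbb{Z}^3$, whose image lies on the cone $\mathcal{C}=\{(X,Y,Z):Y^2=4XZ\}$. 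In this language $|v|^2=\langle\phi(v),g\rangle$ with $g=(A,B,C)$, and $L$ is similar to an integral lattice exactly when the line $\mathbb{R}g$ is rational (equivalently, $A,B,C$ are pairwise commensurable), since one may then rescale $L$ to clear denominators.

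For the implication ``polygon $\Rightarrow$ integral'' let $e_1,\dots,e_n$ be the edges of a convex equilateral $n$-gon, all of squared length $\ell^2$. Then $\langle\phi(e_i),g\rangle=\ell^2$ for every $i$, so the integer points $\phi(e_i)$ all lie on the affine plane $\Pi=\{x:\langle x,g\rangle=\ell^2\}$, which misses the apex $0$ of $\mathcal{C}$ because $\ell>0$. Hence the $\phi(e_i)$ lie on the nondegenerate conic $\Pi\cap\mathcal{C}$, on which no three distinct points are collinear. I would then observe that a convex $n$-gon with $n\neq4$ has at least three distinct edge direction-lines: a triangle has no parallel sides, while for $n\ge5$ each line occurs at most twice among the edges, forcing at least $\lceil n/2\rceil\ge3$ of them; the sole excluded configuration, $n=4$, is precisely a rhombus with only two direction-lines. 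Since $\phi$ identifies only antiparallel vectors, three direction-lines give three distinct, hence noncollinear, points $\phi(e_i)$. Consequently the vectors $\phi(e_i)-\phi(e_1)$ span a $2$-dimensional subspace $W$, and $g$ lies in the line $W^{\perp}$; because the $\phi(e_i)$ are integer vectors, $W$ and hence $W^{\perp}$ are rational, so $g$ is a real multiple of a rational vector and $A,B,C$ are commensurable. This is exactly where the hypothesis $n\neq4$ is indispensable, and the argument is otherwise self-contained.

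For the reverse implication I may assume $L$ is integral and must exhibit three lattice vectors of a common length with pairwise distinct directions; their six signed copies, sorted by argument, bound a convex equilateral hexagon (and $6\neq4$), since antipodal pairs cancel and the turning is monotone. Producing such vectors is the crux. For lattices of small determinant a Gauss-circle count already suffices: if every value of $Q$ were attained in at most two direction-lines, the number of $v$ with $|v|^2\le R$ would be at most $4R$, contradicting the asymptotic $2\pi R/\sqrt{AC-B^2}$ once $AC-B^2$ is small. The general case, however, requires arithmetic: I would realize $L$ up to similarity as a proper fractional ideal of an order $\mathcal{O}$ in an imaginary quadratic field, and use that representations of an integer $N$ by the class of $L$ correspond to ideals of norm $N$ in a fixed ideal class of $\mathcal{O}$. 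Choosing $N$ to be a product of sufficiently many rational primes that split in $\mathcal{O}$ (which exist and equidistribute over the finite class group by Chebotarev/Dirichlet) yields at least three such ideals, hence three non-parallel vectors of equal length.

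The step I expect to be the main obstacle is precisely this last one: guaranteeing, for an arbitrary integral binary lattice, a single length realized in at least three independent directions. The geometric half is clean, but controlling the representation numbers of $Q$ uniformly---especially for ``thin'' lattices of large determinant, where short vectors are sparse and the pigeonhole bound fails---forces one into the theory of binary quadratic forms and the distribution of split primes across ideal classes. I would therefore isolate this as a lemma, namely that every positive-definite integral binary form represents some integer with at least three pairwise non-parallel representations, prove it via the ideal-theoretic dictionary, and then both directions of the theorem close up.
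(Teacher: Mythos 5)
First, note that the paper does not prove this statement at all: Theorem~\ref{integral} is quoted from Maehara~\cite[Theorem 5.1]{Maehara} as a known result, so there is no internal proof to compare against. Judged on its own terms, your forward implication (polygon $\Rightarrow$ similar to integral) is correct and complete. The Veronese device is sound: the points $\phi(\vect{e}_i)$ are integer points of the cone lying on the affine plane $\langle x,g\rangle=\ell^2$, a convex equilateral $n$-gon with $n\neq 4$ really does have at least three edge direction-lines, three distinct points of a plane section of the cone avoiding the apex cannot be collinear, and rationality of $W^{\perp}$ then forces $A:B:C$ to be rational, which is exactly similarity to an integral lattice. This half is a clean, self-contained argument.

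The reverse implication, however, has a genuine gap, which you yourself flag: the key lemma (every positive-definite integral binary form takes some value at three pairwise non-parallel lattice vectors) is only sketched. The ideal-theoretic route you outline has real unresolved issues -- non-maximal orders, imprimitive forms, and the need to force at least three ideals of norm $N$ into one prescribed class -- and your fallback pigeonhole bound in fact never applies to an integral lattice: the count of lattice vectors with $|v|^2\leq R$ is asymptotically $\pi R/\sqrt{AC-B^2}$, which exceeds $4R$ only when $AC-B^2<\pi^2/16<1$, while $AC-B^2=D(L)^2$ is a positive integer. The crux admits a completely elementary resolution that would close your proof: by Lemma~\ref{similar} it suffices to treat $\Lambda(m)$ with $m=\nu(L)$, and the composition identity $(a^2+mb^2)(c^2+md^2)=(ac-mbd)^2+m(ad+bc)^2=(ac+mbd)^2+m(ad-bc)^2$ applied to $(1+m)^2=(1^2+m\cdot 1^2)^2$ produces the three pairwise non-parallel vectors $(1+m,0)$ and $(1-m,\pm 2\sqrt{m})$ of common squared length $(1+m)^2$ (for $m=1$ use $(5,0),(3,4),(4,3)$), whence the convex equilateral hexagon. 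Alternatively, within this paper the reverse implication is immediate from Theorem~\ref{mod}, which already asserts that every planar integral lattice contains a convex equilateral $6$-gon. As written, your argument does not yet constitute a proof of the ``if'' direction.
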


\begin{Thm}[{\cite[Theorem 4.1]{Maehara}}]\label{mod}
Every planar integral lattice $L$ contains a convex equilateral $n$-gon for every even $n \geq 4$.
A planar integral lattice $L$ contains an equilateral $n$-gon for some odd $n\geq 3$ if and only if $\nu(L)\equiv 3 \pmod 4$. 
\end{Thm}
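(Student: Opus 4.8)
The plan is to translate the whole problem into the imaginary quadratic field $K=\mathbb{Q}(\sqrt{-\nu})$. Writing $L=\mathbb{Z}v_1+\mathbb{Z}v_2\subset\mathbb{C}$ with $|v_1|^2=a$, $v_1\cdot v_2=b$, $|v_2|^2=c$, one checks that $\tau:=v_2/v_1$ satisfies $a\tau^2-2b\tau+c=0$, so $\tau\in K$ because the square-free part of $ac-b^2=D(L)^2$ is $\nu$. Hence, up to the similarity already allowed in the statement, $L=v_1(\mathbb{Z}+\mathbb{Z}\tau)$ is a rank-two $\mathbb{Z}$-submodule of $K$: its edge vectors are elements of $K$ and the squared length of an edge is the field norm $N$. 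Since the discriminant of $K$ equals $-\nu$ when $\nu\equiv3\pmod4$ and $-4\nu$ otherwise, I would record the clean reformulation that \emph{$\nu\equiv 3\pmod 4$ holds if and only if the prime $2$ is unramified in $K$}. For the even statement, which I expect to be routine, I would use that the representation numbers of $Q(x,y)=|xv_1+yv_2|^2$ are unbounded (take the norm of a product of many primes split in $K$ and lying in the ideal class of $L$); thus for each $k\ge2$ there is an $m$ with at least $k$ pairwise non-parallel lattice vectors of squared length $m$, and these together with their negatives are $2k$ vectors summing to $0$ which, laid out in order of argument, form a centrally symmetric convex $2k$-gon with lattice vertices.

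For the ``only if'' direction I would argue by an obstruction at the prime above $2$. Suppose $L$ carries an equilateral $n$-gon with edges $\alpha_1,\dots,\alpha_n\in K$ of common norm $m$ and $\sum_i\alpha_i=0$, and suppose $\nu\not\equiv3\pmod4$, so $2$ ramifies: $2\mathcal{O}_K=\mathcal{P}^2$ with residue field $\mathbb{F}_2$. Since $\mathcal{P}$ is the unique prime over $2$ it is stable under conjugation, whence $v_{\mathcal{P}}(\bar\alpha)=v_{\mathcal{P}}(\alpha)$ and $v_{\mathcal{P}}(m)=v_{\mathcal{P}}(\alpha\bar\alpha)=2\,v_{\mathcal{P}}(\alpha)$; as $v_{\mathcal{P}}(m)=2\,v_2(m)$ for the rational integer $m$, every edge satisfies $v_{\mathcal{P}}(\alpha_i)=v_2(m)=:t$. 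Dividing $\sum_i\alpha_i=0$ by a uniformizer raised to the power $t$ turns every term into a $\mathcal{P}$-adic unit, and reducing modulo $\mathcal{P}$ gives $\sum_i 1=0$ in $\mathbb{F}_2$, i.e. $n$ is even. Therefore the existence of an odd-gon forces $2$ to be unramified, i.e. $\nu\equiv3\pmod4$.

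For the converse, assume $\nu\equiv3\pmod4$. Dividing a hoped-for relation by one edge reduces the construction to a purely field-theoretic statement: \emph{there exist an odd number of norm-one elements $u_1=1,u_2,\dots,u_n\in K$, with $u_i\ne\pm u_j$, such that $u_1+\cdots+u_n=0$}. Indeed, given such a relation one clears denominators by a single factor $\lambda\in K^\times$ (itself a similarity), placing all $\lambda u_i$ in $L$ with equal norm, and sorting them by argument produces a genuine, in fact convex, equilateral $n$-gon. To build the relation I would exploit that $2$ is unramified: when $2$ is inert, $\mathcal{O}_K/2\cong\mathbb{F}_4$ and its three nonzero classes satisfy $1+\zeta+\zeta^2=0$, while the split case admits analogous bookkeeping in $\mathbb{F}_2\times\mathbb{F}_2$. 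The substance of the argument is to upgrade such a vanishing sum from modulo $2$ to an exact integral identity; here I would descend from a ramified prime $p\mid\nu$, generalizing the Eisenstein triangle ($\nu=3$, where $1+\zeta_3+\zeta_3^2=0$) to an equilateral $p$-gon attached to $p$, which is also what makes the largest prime factor of $\nu$ the relevant parameter in the main theorem.

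I expect the even statement and the ``only if'' direction to follow quickly once the reduction to $K$ is in place, and I regard the construction as the main obstacle: producing an exact integral relation out of the mod-$2$ (or mod-$p$) one and simultaneously controlling the number of edges so as to obtain an honest simple convex odd-gon. This is precisely the point that the paper's main theorem sharpens, and it is where the bulk of the work should lie.
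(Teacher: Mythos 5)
First, a point of reference: the paper does not prove this statement at all --- it is imported verbatim from Maehara as \cite[Theorem 4.1]{Maehara} --- so there is no internal proof to compare against; I can only judge your argument on its own terms. Your reformulation inside $K=\mathbb{Q}(\sqrt{-\nu})$ is sound, and your ramification argument for the ``only if'' direction is correct and complete: $\mathcal{P}$ is Galois-stable, so every edge has the same $\mathcal{P}$-adic valuation $v_2(m)$, and reducing the normalized relation in the residue field $\mathbb{F}_2$ forces $n\equiv 0\pmod 2$. (The elementary counterpart, visible in the proof of Theorem~\ref{prime} here, analyzes $a_i^2+b_i^2\nu=k^2$ by congruences; yours is a clean conceptual repackaging.) The even case is also true by your method, although the unboundedness of representation numbers of the binary form attached to $L$ (a form of a generally non-maximal order, so the ideal-class bookkeeping is not free) is a much heavier input than needed: the four distinct vectors $(\pm 1,\pm\sqrt{m})$ already give a rhombus in $\Lambda(m)$, and Lemma~\ref{Lemma4.1} then produces every even $n\geq 4$.

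The genuine gap is the ``if'' direction, and you flag it yourself (``I regard the construction as the main obstacle''). There are two problems. First, you supply no construction: ``descend from a ramified prime $p\mid\nu$, generalizing the Eisenstein triangle to an equilateral $p$-gon attached to $p$'' is exactly the hard question this paper is about, settled here only for $p<29$ and only by computer search (Proposition~\ref{examples}, Tables~\ref{13} and~\ref{23}); it cannot be taken for granted. Second, your reduction overshoots: demanding $n$ norm-one elements with $u_i\neq\pm u_j$ is, via Lemma~\ref{convex}, equivalent to a \emph{convex} equilateral odd-gon --- precisely Maehara's Problem~\ref{MaeharaProblem}, open before this paper. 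The theorem you are proving only asserts a (possibly self-intersecting, not necessarily convex) equilateral $n$-gon for \emph{some} odd $n$, for which repeated edge vectors are permitted, and that makes the construction elementary: from $\left(\tfrac{m-1}{2}\right)^2+m=\left(\tfrac{m+1}{2}\right)^2$ one gets three pairwise non-parallel vectors $A=(\tfrac{m+1}{2},0)$, $B=(-\tfrac{m-1}{2},\sqrt{m})$, $C=(-\tfrac{m-1}{2},-\sqrt{m})$ in $\Lambda(m)$ of equal length, and $\tfrac{m-1}{2}A+\tfrac{m+1}{4}B+\tfrac{m+1}{4}C=\vect{0}$ with nonnegative integer coefficients precisely because $m\equiv 3\pmod 4$; interleaving these $m$ edge vectors so that no two consecutive ones coincide yields a closed equilateral $m$-circuit in $\Lambda(m)$. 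Some explicit construction of this kind is what your proof of the forward implication is missing.
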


\begin{Thm}[{\cite[Theorem 6.1]{Maehara}}]\label{triangle}
For a planar lattice $L$, the following three are equivalent. 
(i) $L$ contains an equilateral triangle. 
(ii) $L$ contains a convex equilateral $n$-gon for every $n\geq 3$. 
(iii) $L$ is similar to an integral lattice $L'$ with $\nu(L')=3$. 
\end{Thm}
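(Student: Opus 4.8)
The plan is to establish $(ii)\Rightarrow(i)$, the equivalence $(i)\Leftrightarrow(iii)$, and finally $(i)\Rightarrow(ii)$; together these give the full equivalence. All three conditions are invariant under similarity, and an equilateral triangle is a convex equilateral $3$-gon, so $(ii)\Rightarrow(i)$ is immediate by taking $n=3$. Moreover, any $L$ satisfying $(i)$ contains a convex equilateral $3$-gon, hence by Theorem \ref{integral} is similar to an integral lattice; likewise $(iii)$ furnishes an integral representative. In both remaining arguments I may therefore replace $L$ by a similar integral lattice, assuming throughout that $L$ is integral with Gram matrix $\begin{pmatrix} a & \beta \\ \beta & c\end{pmatrix}$, $a,\beta,c\in\mathbb{Z}$, and $D(L)^2=ac-\beta^2$.

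For $(i)\Leftrightarrow(iii)$ I would pass to the modulus of the lattice. Writing the plane as $\mathbb{C}$ and choosing a basis $e_1,e_2$ of $L$ with $e_1$ real and $e_2$ in the upper half-plane, set $\tau=e_2/e_1$; a short computation gives $\tau=\bigl(\beta+\sqrt{-(ac-\beta^2)}\bigr)/a$, so $\mathbb{Q}(\tau)=\mathbb{Q}\bigl(\sqrt{-D(L)^2}\bigr)=\mathbb{Q}\bigl(\sqrt{-\nu(L)}\bigr)$, since $D(L)^2$ and $\nu(L)$ differ by a square. Now $L$ contains an equilateral triangle if and only if there is a nonzero $u\in L$ with $\zeta u\in L$, where $\zeta=e^{\pi i/3}$ is rotation by $60^\circ$; dividing by $e_1$, this asserts that $\zeta(m+n\tau)=p+q\tau$ has a nontrivial integer solution, which happens exactly when $\zeta\in\mathbb{Q}(\tau)$. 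Because $\mathbb{Q}(\zeta)=\mathbb{Q}(\sqrt{-3})$, this is equivalent to $\mathbb{Q}(\tau)=\mathbb{Q}(\sqrt{-3})$, and as $\nu(L)$ is square-free this is in turn equivalent to $\nu(L)=3$. This chain yields $(i)\Leftrightarrow(iii)$ at once. (For the single arrow $(i)\Rightarrow(iii)$ one may instead argue elementarily: an equilateral triangle in the integral lattice spans a sublattice $M\subseteq L$ with Gram matrix $\begin{pmatrix} 2t & t\\ t & 2t\end{pmatrix}$, whence $D(M)^2=3t^2$ and $\nu(M)=3$; comparing with $D(M)^2=[L:M]^2D(L)^2$ forces $\nu(L)=3$.)

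For $(i)\Rightarrow(ii)$ I would use the triangle to produce a sublattice $M\subseteq L$ similar to the hexagonal lattice $A_2$, realized as the Eisenstein integers $\mathbb{Z}[\zeta]$, and build all the polygons inside $M$; since containment of a convex equilateral $n$-gon is similarity-invariant and $M\subseteq L$, it suffices to construct them in $\mathbb{Z}[\zeta]$. The mechanism is the standard one: finitely many nonzero vectors of equal length, in pairwise distinct directions and summing to $0$, laid head-to-tail in order of increasing argument, form a convex equilateral polygon. I fix a norm $N=7^{k}$; the Eisenstein integers of norm $N$ form a set closed under multiplication by the six units $\zeta^{j}$ and under negation, consisting of $6(k+1)$ vectors in pairwise distinct directions (equal norm and equal direction force equality), so $k$ may be taken large enough for the count below. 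For even $n=2s$ I select $s$ antipodal pairs $\{w,-w\}$ of norm-$N$ vectors in distinct directions (this case also follows from Theorem \ref{mod}); for odd $n=2t+3$ I select one $120^\circ$-triangle $\{v,\zeta^{2}v,\zeta^{4}v\}$, which already sums to $0$, together with $t$ further antipodal pairs in directions disjoint from those six. In either case the chosen $n$ vectors have equal length, distinct directions, and total $0$, so the sorting construction yields a convex equilateral $n$-gon, completing the proof.

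The main obstacle is the $(i)\Rightarrow(ii)$ construction: one must guarantee, for every odd $n$, enough equal-length lattice vectors in pairwise distinct directions to assemble into a closed convex chain, and it is exactly the $120^\circ$-triangle that allows an odd number of equal edges to close up at all—the phenomenon forbidden on $\mathbb{Z}^2$ (Theorem \ref{Ball}) and admitted here by $\nu=3\equiv3\pmod 4$ (Theorem \ref{mod}). By contrast, the arithmetic equivalence $(i)\Leftrightarrow(iii)$ reduces cleanly to the single observation that $\nu(L)=3$ is precisely the condition $\mathbb{Q}(\tau)=\mathbb{Q}(\sqrt{-3})$ making $L$ an Eisenstein, i.e. hexagonal-type, lattice.
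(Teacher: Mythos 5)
The paper gives no proof of Theorem~\ref{triangle} at all---it is quoted from Maehara~\cite{Maehara}---so the only meaningful comparison is with the machinery this paper builds to generalize it (Theorem~\ref{generalized1.4}). Your argument is correct and self-contained, and it takes a genuinely different route. In the paper's framework the case $n=3$ would be handled by combining Theorem~\ref{integral}, Theorem~\ref{mod} and the mod-$p$ descent of Theorem~\ref{prime} for necessity, and, for sufficiency, the explicit triple $(2,0),(-1,\pm\sqrt{3})$ of Proposition~\ref{examples} together with the ``add two sides'' induction of Lemma~\ref{Lemma4.1}. You instead (a) prove $(i)\Leftrightarrow(iii)$ in one stroke by identifying the plane with $\mathbb{C}$ and observing that a lattice equilateral triangle exists precisely when $\zeta=e^{\pi i/3}$ lies in $\mathbb{Q}(\tau)=\mathbb{Q}(\sqrt{-\nu(L)})$, i.e.\ when $\nu(L)=3$; and (b) prove $(i)\Rightarrow(ii)$ by passing to an Eisenstein sublattice and assembling, for every $n$ at once, $n$ distinct equal-length vectors of norm $7^{k}$ with sum $\vect{0}$ (one $120^{\circ}$-triple plus antipodal pairs), then invoking the sorting construction of Lemma~\ref{convex}. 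Both halves check out: the field-theoretic equivalence is clean (and your parenthetical Gram-matrix argument, with $D(M)^2=3t^2$ and $D(M)^2=[L:M]^2D(L)^2$, is a valid elementary substitute), and the counting you need---that there are $6(k+1)$ Eisenstein integers of norm $7^k$, pairwise non-proportional---follows from $7=\pi\bar{\pi}$ with $\pi,\bar{\pi}$ non-associate primes, though you should state that justification rather than assert the count. What your approach buys is a conceptual explanation of why $3$ is the only admissible value of $\nu$ (it is exactly the condition $\mathbb{Q}(\tau)=\mathbb{Q}(\sqrt{-3})$) and independence from Lemma~\ref{Lemma4.1}; what the paper's computational route buys is that it scales to the larger primes treated in Proposition~\ref{examples}, where no such algebraic shortcut is available.
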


From Theorems~\ref{integral} and \ref{mod}, we see that 
for each even $n \neq 4$, a planar lattice $L$ contains 
%an equilateral $n$-gon
a convex equilateral $n$-gon
if and only if $L$ is similar to an integral lattice. 
In this paper, we consider the following problem. 

\begin{Prob}\label{oddgon}
Let $n$ be an odd integer with $n \geq 3$. Find the condition on a planar lattice $L$ so that $L$ contains an equilateral $n$-gon.
\end{Prob}

Theorem~\ref{triangle} answers Problem~\ref{oddgon} in the case where $n=3$. 
In this paper, we extend Theorem~\ref{triangle} and answer Problem~\ref{oddgon}
when
$n$ is small. 
We also give an affirmative answer to the following problem, which is posed by Maehara~\cite{Educ, Maehara}.

\begin{Prob}[{\cite[Problem 1.1]{Maehara}}]\label{MaeharaProblem}
Is there a planar integral lattice $L$ with $\nu(L)\neq 3$ that contains a convex equilateral $n$-gon for some odd $n>3$?
\end{Prob}

\section{Preliminaries}\label{preliminaries}

In this paper, an element of a Euclidean plane $\mathbb{R}^2$ is called a \textit{vector} as well as a \textit{point}. 
For linearly independent vectors $\vect{a}, \vect{b}$ in the plane $\mathbb{R}^2$, 
$L[\vect{a}, \vect{b}]$ denotes the \textit{planar lattice} generated by $\vect{a}, \vect{b}$, that is, 
$L[\vect{a}, \vect{b}]=\{ m\vect{a} + n\vect{b} \mid m, n \in \mathbb{Z} \} \subset \mathbb{R}^2$.
If a lattice $L$ is contained in another lattice $L'$, then $L$ is said to be a \textit{sublattice} of $L'$. 
A lattice $L$ is said to be
%\textit{similar} to
\textit{similar to} 
$L'$ if there is a $\lambda >0$ such that $\lambda L = \{ \lambda \vect{x} \mid \vect{x} \in L \}$ is isometric to $L'$. 
A planar lattice $L$ is called an \textit{integral lattice}, if for every $\vect{x}, \vect{y} \in L$, 
the inner product $\vect{x}\cdot \vect{y}$ is an integer. 

For a planar lattice $L$, let $D(L)$ denote the area of a fundamental parallelogram of the lattice $L$. 
Thus, if $L=L[\vect{a}, \vect{b}]$, then $D(L)=|\det(\vect{a}, \vect{b})|$. Although there are many choices of vectors $\vect{a}, \vect{b}$
that generate the planar lattice $L$, the area $D(L)=|\det(\vect{a}, \vect{b})|$ is independent of the choice of generating vectors 
$\vect{a}, \vect{b}$ of $L$. 
If $L$ is a planar integral lattice, then $D(L)^2$ is an integer. In fact, if $L=L[\vect{a}, \vect{b}]$, then
\[ D(L)^2 = \det(\vect{a}, \vect{b})^2 = \left| \begin{array}{cc} \vect{a}\cdot \vect{a} & \vect{a} \cdot \vect{b} \\
\vect{b}\cdot \vect{a} & \vect{b} \cdot \vect{b} \end{array} \right|  \in \mathbb{Z}. \]
Let us denote the square-free part of $D(L)^2$ by $\nu(L)$, that is, 
$\nu(L)$ is the square-free integer that satisfies $D(L)^2=k^2\nu(L)$ for some integer $k$.

For a square-free integer $m>0$, we denote the rectangular lattice $L[(1,0), \allowbreak (0,\sqrt{m})]$ by $\Lambda(m)$.

An \textit{equilateral polygon} is a polygon whose sides are all equal in length. 
If every vertex of a polygon $P$ is in a set $S$, then we say $S$ \textit{contains} the polygon $P$.

For an equilateral $n$-gon $\textrm{A}_1\textrm{A}_2\ldots\textrm{A}_n$, 
the vectors $\vect{e}_1=\overrightarrow{\textrm{A}_n\textrm{A}_1}, \, \allowbreak 
\vect{e}_2=\overrightarrow{\textrm{A}_1\textrm{A}_2}, \, \allowbreak 
\ldots ,  \, \allowbreak 
\vect{e}_n=\overrightarrow{\textrm{A}_{n-1}\textrm{A}_n}$
are called the \textit{edge-vectors} of the polygon.
Clearly $\vect{e}_1 + \vect{e}_2 + \cdots + \vect{e}_n =\vect{0}$
and $|\vect{e}_1|=|\vect{e}_2|=\cdots =|\vect{e}_n|$.

Maehara~\cite{Educ, Maehara}
proved the following results. We will use them in the next section.

\begin{Lem}[{\cite[Lemma 3]{Educ}}]\label{convex}
A planar lattice $L$ contains a convex equilateral $n$-gon if and only if $L$ contains $n$ distinct vectors 
$\vect{e}_1, \vect{e}_2, \ldots, \vect{e}_n$ such that $|\vect{e}_1|=|\vect{e}_2|=\cdots =|\vect{e}_n|$ and 
$\vect{e}_1+\vect{e}_2+\cdots +\vect{e}_n=\vect{0}$. 
\end{Lem}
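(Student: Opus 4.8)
The plan is to prove both implications directly, using the edge-vectors of the polygon as the $n$ vectors appearing in the statement. For the ``only if'' direction, suppose $L$ contains a convex equilateral $n$-gon $\mathrm{A}_1\mathrm{A}_2\cdots\mathrm{A}_n$ and take its edge-vectors $\vect{e}_1,\ldots,\vect{e}_n$. These lie in $L$, since each is a difference of two lattice points; they have a common length; and they sum to $\vect{0}$ by the defining property of edge-vectors. The only point needing an argument is that they are distinct: as one traverses a convex polygon cyclically, the edge directions turn strictly in one sense, so no two edge-vectors point the same way, and since all share one length, no two can coincide.

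For the ``if'' direction, the idea is to reorder the given vectors by angle and lay them head-to-tail. Write each $\vect{e}_i$ as $r(\cos\theta_i,\sin\theta_i)$ with the common length $r=|\vect{e}_1|>0$ and $\theta_i\in[0,2\pi)$. Because the $\vect{e}_i$ are distinct and share the length $r$, their arguments $\theta_i$ are pairwise distinct, so after relabeling we may assume $\theta_1<\theta_2<\cdots<\theta_n$. Define vertices by the partial sums $\mathrm{B}_0=\vect{0}$ and $\mathrm{B}_k=\vect{e}_1+\cdots+\vect{e}_k$ for $1\le k\le n$. The hypothesis $\vect{e}_1+\cdots+\vect{e}_n=\vect{0}$ gives $\mathrm{B}_n=\mathrm{B}_0$, so $\mathrm{B}_0\mathrm{B}_1\cdots\mathrm{B}_{n-1}$ is a closed polygonal chain whose successive edge-vectors are exactly $\vect{e}_1,\ldots,\vect{e}_n$. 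All its vertices lie in $L$, being sums of elements of $L$, and all its sides have length $r$, so it is equilateral.

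It remains to show that this polygon is convex and non-self-intersecting, and this is where the main work lies. As one goes around the chain, the turning (exterior) angle at each vertex equals the gap between consecutive arguments: $\theta_{k+1}-\theta_k$ at the interior vertices and $\theta_1+2\pi-\theta_n$ at the wrap-around. These gaps are positive and sum to $2\pi$, so convexity reduces to showing that every gap is strictly less than $\pi$. The hard part is precisely this step, and it is where the condition $\vect{e}_1+\cdots+\vect{e}_n=\vect{0}$ is essential: if some angular gap were at least $\pi$, then all the $\vect{e}_i$ would lie in a closed half-plane, and their sum could vanish only if every $\vect{e}_i$ lay on the bounding line; but a line contains just two vectors of length $r$, contradicting that the $n\ge 3$ vectors are distinct. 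Hence every gap lies in $(0,\pi)$, so each interior angle is less than $\pi$ while the total turning is exactly $2\pi$. A polygonal chain whose edge directions increase monotonically through a single full turn is simple and convex, which yields the desired convex equilateral $n$-gon contained in $L$.
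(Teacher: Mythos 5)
Your proof is correct. Note that the paper itself gives no proof of this lemma --- it is quoted from Maehara's earlier work (\cite[Lemma 3]{Educ}) as a known result --- so there is no in-paper argument to compare against; your write-up supplies the standard proof one would expect there. Both directions are sound: the ``only if'' direction correctly observes that strict convexity forces the edge directions, hence the equal-length edge-vectors, to be pairwise distinct; and in the ``if'' direction the key step --- that no angular gap between consecutive arguments can reach $\pi$, since otherwise all $n\ge 3$ distinct vectors of common length $r>0$ would lie in a closed half-plane and the condition $\vect{e}_1+\cdots+\vect{e}_n=\vect{0}$ would force them onto a single line, which carries only two vectors of length $r$ --- is exactly the right use of the sum-zero hypothesis, after which the monotone-turning criterion gives a simple convex polygon with vertices in $L$.
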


\begin{Lem}[{\cite[Corollary 4.1]{Maehara}}]\label{similar}
A planar integral lattice $L$ contains an equilateral $n$-gon (resp. a convex equilateral $n$-gon) if and only if
$\Lambda(\nu(L) )$ contains an equilateral $n$-gon (resp. a convex equilateral $n$-gon).
\end{Lem}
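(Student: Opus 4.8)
The plan is to pass to the complex plane and reduce everything to a fixed imaginary quadratic field determined by $\nu(L)$. Identify $\mathbb{R}^2$ with $\mathbb{C}$ by $(x,y)\mapsto x+iy$, and write $\alpha,\beta\in\mathbb{C}$ for the generators $\vect{a},\vect{b}$ of $L=L[\vect{a},\vect{b}]$. Since the property of containing an equilateral $n$-gon (resp.\ a convex one) is invariant under similarities — rotations, reflections, and positive scalings all preserve edge lengths, the relation $\sum\vect{e}_i=\vect{0}$, and convexity — I may replace $L$ by the homothetic lattice $\tfrac1\alpha L=\mathbb{Z}+\mathbb{Z}\tau$, where $\tau=\beta/\alpha$. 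Writing $A=\vect{a}\cdot\vect{a}$, $B=\vect{a}\cdot\vect{b}$, $C=\vect{b}\cdot\vect{b}$ for the (integer) Gram entries and using $\operatorname{Im}(\bar\alpha\beta)=\pm D(L)$, a direct computation gives
\[ \tau=\frac{B}{A}+\frac{D(L)}{A}\,i=\frac{B}{A}+\frac{k}{A}\sqrt{-\nu(L)}, \]
after a reflection to arrange $\operatorname{Im}\tau>0$ and using $D(L)^2=AC-B^2=k^2\nu(L)$. The crucial role of integrality is precisely this: it forces $\tau$, and hence all of $L=\mathbb{Z}+\mathbb{Z}\tau$, to lie inside the imaginary quadratic field $K:=\mathbb{Q}(\sqrt{-\nu(L)})$, which depends only on $\nu(L)$. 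The same holds verbatim for $\Lambda(\nu(L))=\mathbb{Z}+\mathbb{Z}\sqrt{-\nu(L)}\subset K$.

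Next I would prove the field-theoretic criterion: a full-rank lattice $L\subset K$ contains $n$ vectors of equal length summing to $\vect{0}$ (all distinct, in the convex case) if and only if $K$ contains $n$ elements $u_1,\dots,u_n$ of modulus $1$ with $u_1+\cdots+u_n=0$ (all distinct, in the convex case). For the forward direction, given edge-vectors $\vect{e}_1,\dots,\vect{e}_n\in L\subset K$ of equal length, the quotients $u_j:=\vect{e}_j/\vect{e}_1\in K$ have modulus $1$ and sum to $0$. For the converse, given such $u_j\in K$, I realize them simultaneously: each $u_j^{-1}L$ is a full-rank $\mathbb{Z}$-lattice inside the two-dimensional $\mathbb{Q}$-vector space $K$, any two such lattices are commensurable, so $M:=\bigcap_j u_j^{-1}L$ is again full-rank and contains a nonzero $w$; then $\vect{e}_j:=w u_j\in L$ have common length $|w|$, sum to $0$, and are distinct whenever the $u_j$ are. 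In the convex case I then invoke Lemma~\ref{convex} to turn these distinct vectors into an actual convex equilateral $n$-gon (and conversely read its edge-vectors off); in the general case the head-to-tail construction and the edge-vector identities from Section~\ref{preliminaries} play the same role.

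Combining the two steps finishes the proof: whether a planar integral lattice contains an equilateral $n$-gon (resp.\ a convex one) is governed solely by the existence of $n$ unit-modulus elements of $K=\mathbb{Q}(\sqrt{-\nu(L)})$ summing to zero (resp.\ $n$ distinct such elements), a condition manifestly depending only on $\nu(L)$. Since $L$ and $\Lambda(\nu(L))$ determine the same $K$, they satisfy the criterion simultaneously, which is the assertion. I expect the main obstacle to be the converse realization step — producing a single $w$ with $wu_j\in L$ for all $j$ and, in the non-convex case, ensuring the resulting closed path is a genuine (non-degenerate) $n$-gon; the convex case is clean because Lemma~\ref{convex} reduces it directly to the existence of $n$ distinct equal-length vectors with zero sum, with no further non-degeneracy to check.
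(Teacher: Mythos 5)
Your argument is correct, but note that the paper does not actually prove Lemma~\ref{similar}: it is imported verbatim from Maehara \cite[Corollary 4.1]{Maehara}, so there is no in-paper proof to compare against. What you have written is a legitimate self-contained derivation. The two pillars both hold: (1) integrality of the Gram entries $A,B,C$ forces $\tau=B/A+(k/A)\sqrt{-\nu(L)}$, so after the similarity $z\mapsto z/\alpha$ (and possibly a reflection) both $L$ and $\Lambda(\nu(L))$ become full-rank lattices inside $K=\mathbb{Q}(\sqrt{-\nu(L)})$; and (2) any two full-rank lattices in $K$ are commensurable, so the intersection $\bigcap_j u_j^{-1}L_2$ is again full rank and supplies the scalar $w$. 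The one place where your write-up is looser than it needs to be is the non-degeneracy worry at the end: it dissolves once you observe that if the $u_j=\vect{f}_j/\vect{f}_1$ come from the edge-vectors $\vect{f}_j$ of an actual equilateral $n$-gon $P$ in $L_1$, then the realized vectors $wu_j=(w/\vect{f}_1)\vect{f}_j$ are exactly the edge-vectors of the image of $P$ under the similarity $z\mapsto(w/\vect{f}_1)z$, which is therefore a genuine equilateral $n$-gon in $L_2$ (convex if $P$ was); so you should transfer the polygon itself rather than pass through the abstract ``$n$ unit vectors in $K$'' criterion, which indeed does not encode non-degeneracy. With that phrasing, Lemma~\ref{convex} is not even needed for the convex case, since convexity is preserved by similarities. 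This commensurability-in-$\mathbb{Q}(\sqrt{-m})$ mechanism is essentially the same one underlying Maehara's original proof and the descent step in the proof of Theorem~\ref{prime}, so the approach is consistent with the paper's toolkit even though the paper itself leaves this lemma unproved.
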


\begin{Lem}[{\cite[Lemma 4.1]{Maehara}}]\label{Lemma4.1}
If $\Lambda(m)$ contains an equilateral $n$-gon (resp. a convex equilateral $n$-gon),
then it contains an equilateral $(n+2)$-gon (resp. a convex equilateral $(n+2)$-gon).
\end{Lem}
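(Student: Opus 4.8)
The plan is to pass to the complex picture and exploit multiplicativity. Identify $\mathbb{R}^2$ with $\mathbb{C}$ so that $(1,0)$ corresponds to $1$ and $(0,\sqrt m)$ to $\sqrt{-m}$; then $\Lambda(m)$ becomes the ring $\mathbb{Z}[\sqrt{-m}]=\{a+b\sqrt{-m}\mid a,b\in\mathbb{Z}\}$, which is closed under multiplication and complex conjugation, and the squared length of $z=a+b\sqrt{-m}$ equals its norm $z\overline z=a^2+mb^2$. By Lemma~\ref{convex}, a convex equilateral $n$-gon in $\Lambda(m)$ is precisely a set of $n$ distinct elements $z_1,\dots,z_n\in\mathbb{Z}[\sqrt{-m}]$ of a common norm $N$ with $z_1+\cdots+z_n=0$; a general (not necessarily convex) equilateral $n$-gon is the same data read as an ordered edge-sequence whose partial sums (the vertices) are distinct. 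The crucial remark is that for any $w\in\mathbb{Z}[\sqrt{-m}]\setminus\{0\}$ the map $z\mapsto wz$ is a spiral similarity preserving $\Lambda(m)$: it multiplies every edge length by $|w|$, preserves the relation $\sum=0$, preserves distinctness, and preserves convexity. Hence I may freely replace the given polygon by a similar one of edge-norm $N|w|^2$.

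To create two extra edges I first secure a supply of equal-norm lattice vectors pointing in many different directions. Because an imaginary quadratic field contains only finitely many roots of unity, I can pick $\alpha\in\mathbb{Z}[\sqrt{-m}]$ with both coordinates nonzero such that $\alpha/\overline\alpha$ is not a root of unity (for $m\notin\{1,3\}$ any such $\alpha$ works, since then $\pm1$ are the only units; the sporadic extra units for $m=1,3$ merely exclude a few exceptional arguments). Put $q=\alpha\overline\alpha$. Then for each $j$ the $j+1$ elements $\alpha^{\,j-i}\overline\alpha^{\,i}$ with $0\le i\le j$ all have norm $q^{j}$, and they are pairwise non-parallel because $(\alpha/\overline\alpha)^{k}\ne\pm1$ for every $k\ne0$. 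Multiplying these by $z_1$ gives $j+1$ pairwise non-parallel elements of norm $q^{j}N$. Now replace the given polygon by its image under $z\mapsto\alpha^{j}z$, so its edge-vectors $\alpha^{j}z_1,\dots,\alpha^{j}z_n$ have norm $q^{j}N$ and occupy at most $n$ distinct lines. Since the $j+1$ family vectors are pairwise non-parallel, at most $n$ of them can be parallel to an edge; taking $j$ with $j+1>n$ therefore yields a vector $\vect f$ of norm $q^{j}N$ parallel to no edge, so in particular $\vect f,-\vect f\notin\{\alpha^{j}z_1,\dots,\alpha^{j}z_n\}$.

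For the convex statement this finishes the job: the $n+2$ vectors $\alpha^{j}z_1,\dots,\alpha^{j}z_n,\vect f,-\vect f$ are distinct, share the norm $q^{j}N$, and sum to $\vect 0$, so Lemma~\ref{convex} produces a convex equilateral $(n+2)$-gon. For the general statement I instead perform a rhombus detour on the scaled polygon: replacing one edge, say the one with edge-vector $\alpha^{j}z_i$ running from vertex $P$ to vertex $Q$, by the three edges $\vect f,\ \alpha^{j}z_i,\ -\vect f$, i.e. routing the path as $P\to P+\vect f\to Q+\vect f\to Q$. All three new edges have length $\sqrt{q^{j}N}$, the total displacement is unchanged, and the two inserted vertices are $P+\vect f$ and $Q+\vect f$; to keep the vertex set distinct I only need $\vect f$ to avoid the at most $2n$ forbidden values $\alpha^{j}A_k-P$ and $\alpha^{j}A_k-Q$, which the family above supplies once $j$ is large. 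The main difficulty to flag is exactly the first move of the second paragraph: unlike the even case one cannot in general reuse the original edge length, since a lattice such as $\Lambda(3)$ may already have all its shortest vectors consumed by the $n$-gon (the three length-$2$ directions form an equilateral triangle and admit no fifth vector summing back to $\vect 0$). Guaranteeing a fresh direction at a usable common length is the heart of the matter, and it is precisely the choice of $\alpha$ with $\alpha/\overline\alpha$ not a root of unity that delivers it; the one point demanding care is verifying that such $\alpha$ exists for every $m$, which reduces to the finiteness of the units in $\mathbb{Z}[\sqrt{-m}]$.
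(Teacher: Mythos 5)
The paper does not prove this lemma at all: it is quoted verbatim from Maehara \cite[Lemma 4.1]{Maehara} and used as a black box, so there is no in-paper argument to compare yours against. Your proof is self-contained and, as far as I can check, correct. It is also very much in the spirit of the paper's own toolkit: the multiplication-by-$\alpha$ spiral similarity you use is exactly the map induced by $\bigl(\begin{smallmatrix} s & t\sqrt{m}\\ -t\sqrt{m} & s\end{smallmatrix}\bigr)$ that appears in the proof of Theorem~\ref{prime}, recast as multiplication in $\mathbb{Z}[\sqrt{-m}]$. The two genuinely new ingredients you supply are (a) the family $\alpha^{j-i}\overline{\alpha}^{\,i}$, $0\le i\le j$, of equal-norm, pairwise non-parallel lattice vectors, which guarantees a fresh direction $\vect{f}$ at a common attainable length --- this correctly addresses the real obstruction you flag, namely that the original side length need not admit an unused direction --- and (b) the rhombus detour for the non-convex case versus the direct appeal to Lemma~\ref{convex} for the convex case. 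Two points deserve slightly more care than you give them: for $m=3$ (which does occur in this paper, e.g.\ in Table~\ref{13}) you should exhibit a concrete admissible $\alpha$ rather than wave at ``a few exceptional arguments'' --- $\alpha=2+\sqrt{-3}$ works, since $\alpha/\overline{\alpha}=(1+4\sqrt{-3})/7$ is not among the six roots of unity of $\mathbb{Q}(\sqrt{-3})$; and in the detour construction it is worth stating explicitly that because $\vect{f}$ is non-parallel to every edge $\alpha^{j}z_k$, no two consecutive edges of the new closed path are parallel, so no degenerate (straight or back-tracking) vertex is created. With those remarks added, the argument stands on its own as a proof of the cited lemma.
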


\section{Results}\label{results}

Let us consider the condition on a planar lattice $L$ so that $L$ contains an equilateral $n$-gon (or a convex equilateral $n$-gon) for each odd $n$. 
First, we prove the following theorem.

\begin{Thm}\label{prime}
Let $n\geq 3$ be an odd number.
If a planar integral lattice $L$ contains an equilateral $n$-gon, not necessarily convex,  
then $n \geq p$ for every prime factor $p$ of $\nu(L)$. 

\end{Thm}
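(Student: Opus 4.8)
The plan is to pass to the canonical rectangular model and reduce the geometry to an arithmetic statement about the form $x^2+my^2$. By Lemma~\ref{similar} it suffices to treat $\Lambda(m)$ with $m=\nu(L)$, which is square-free. Every point of $\Lambda(m)$ has the form $(x,y\sqrt m)$ with $x,y\in\mathbb Z$, so each edge-vector of an equilateral $n$-gon inscribed in $\Lambda(m)$ is $\vect e_i=(a_i,b_i\sqrt m)$ with $a_i,b_i\in\mathbb Z$. Being equilateral means $|\vect e_i|^2=a_i^2+mb_i^2=N$ for one fixed positive integer $N$, and closing up the polygon ($\sum\vect e_i=\vect 0$) gives $\sum_{i=1}^n a_i=0$ and $\sum_{i=1}^n b_i=0$. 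Thus the theorem reduces to: if integers $a_i,b_i$ ($1\le i\le n$) satisfy these three conditions with $n$ odd, then every prime $p\mid m$ obeys $p\le n$. Note that only the necessity of these conditions is used, so convexity plays no role.

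I would fix a prime $p\mid m$ and suppose for contradiction that $p>n$; in particular $p$ is odd, since $n\ge 3$. Among all configurations $(a_i,b_i)_{i=1}^n$ satisfying the three conditions I would pick one with $N$ minimal, which is legitimate as $N$ is a positive integer. Reducing $N=a_i^2+mb_i^2$ modulo $p$ and using $p\mid m$ gives $a_i^2\equiv N\pmod p$ for every $i$. The argument then branches on whether $p$ divides $N$.

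If $p\nmid N$, then $N$ is a nonzero square mod $p$, so each $a_i\equiv\pm c\pmod p$ for a fixed $c\not\equiv0$; writing $s$ for the number of indices with $a_i\equiv c$, the relation $\sum a_i=0$ forces $(2s-n)c\equiv0$, hence $2s\equiv n\pmod p$. Since $2s-n$ lies in $[-n,n]$ and $p>n$, the only multiple of $p$ available is $0$, so $n=2s$ is even, contradicting the oddness of $n$. If $p\mid N$, then $p\mid a_i$ for all $i$; writing $a_i=p\alpha_i$ and $m=pm'$ with $p\nmid m'$ (possible because $m$ is square-free) yields $N/p=p\alpha_i^2+m'b_i^2$. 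When $p\nmid N/p$ the same counting argument applied to the $b_i$ (now $b_i\equiv\pm d\not\equiv0$) again forces $n$ even; and when $p\mid N/p$, one gets $p\mid b_i$ for all $i$, so dividing every $a_i$ and $b_i$ by $p$ produces a configuration with norm $N/p^2<N$, contradicting minimality. In every branch we reach a contradiction, so $p\le n$.

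The step I expect to be the main obstacle is controlling the case $p\mid N$ so that the descent actually terminates; the feature that makes this clean is the square-freeness of $m$, which lets one peel off exactly one factor of $p$ from both $m$ and $N$ at a time and keeps the form $x^2+my^2$ well-behaved modulo $p$. Everything else rests on the elementary counting identity, and it is exactly there that the two hypotheses meet: oddness of $n$ rules out $n=2s$, while the assumption $p>n$ is what pins the only admissible residue at $0$.
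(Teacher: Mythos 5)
Your proof is correct, and at its core it runs on the same engine as the paper's: reduce to $\Lambda(m)$ with $m=\nu(L)$ via Lemma~\ref{similar}, use a minimality/descent argument to control divisibility by $p$, and finish by counting the indices with $a_i\equiv c$ versus $a_i\equiv -c \pmod p$, so that $\sum a_i=0$ forces $p\mid 2s-n$ and the oddness of $n$ together with $|2s-n|\le n<p$ yields a contradiction. The one genuinely different choice is the quantity you minimize. The paper first applies the similarity induced by $\bigl(\begin{smallmatrix} s & t\sqrt{m}\\ -t\sqrt{m} & s\end{smallmatrix}\bigr)$ to replace the given polygon by one whose side length $k$ is itself an integer, and then its descent is clean: $p\mid k$ forces $p\mid a_i$ and $p\mid b_i$, and $p\nmid k$ leads straight to the counting step. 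You instead minimize the squared side length $N=a_i^2+mb_i^2$, which is an integer for free, so you never need the normalization map; the price is that $N$ need not be a perfect square, so the intermediate case where $p$ divides $N$ but $p^2$ does not can occur, and you handle it correctly by reading the relation $N/p\equiv m'b_i^2\pmod p$ and running the identical counting argument on the $b_i$ (using $\sum b_i=0$ and $p\nmid m'$, which is where square-freeness of $m$ enters). Both routes are sound; yours trades one preliminary lemma-like construction for one extra case, and arguably makes more transparent that square-freeness is exactly what keeps the descent terminating.
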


\begin{proof}
Assume that $L$ contains an equilateral $n$-gon and $p$ is a prime factor of $\nu(L)$. By Lemma~\ref{similar}, 
$\Lambda(\nu )$ with $\nu=\nu(L)$ contains an equilateral $n$-gon.
Let $P$ be
%one of the $n$-gons
one of the equilateral $n$-gons
contained in $\Lambda(\nu )$ and 
let $\vect{e}_1, \, \vect{e}_2, \, \ldots , \, \vect{e}_n$ be the edge-vectors of $P$. 
Let $\vect{e}_1=(s, \, t\sqrt{\nu })$, where $s, \, t \in \mathbb{Z}$. 
By the linear transformation $f$ induced by the matrix 
$\bigl(
\begin{smallmatrix}
   s & t\sqrt{\nu } \\
   -t\sqrt{\nu } & s
\end{smallmatrix}
\bigl)$,
the vector $\vect{e}_1$ is mapped to $f(\vect{e}_1)=(s^2+t^2\nu , \, 0)$.
Since $\bigl(
\begin{smallmatrix}
   s & t\sqrt{\nu } \\
   -t\sqrt{\nu } & s
\end{smallmatrix}
\bigl)
\bigl(
\begin{smallmatrix}
   a \\
   b\sqrt{\nu } 
\end{smallmatrix}
\bigl) 
=
\bigl(
\begin{smallmatrix}
  sa+tb\nu \\
   (-ta+sb)\sqrt{\nu } 
\end{smallmatrix}
\bigl)$, $f$ maps $\Lambda(\nu )$ into $\Lambda(\nu )$. 
Hence the vectors $f(\vect{e}_1), \, f(\vect{e}_2), \, \ldots , \, f(\vect{e}_n)$
are in $\Lambda(\nu )$.
Since $f$ is a similarity, the vectors $f(\vect{e}_1), \, f(\vect{e}_2), \, \ldots , \, f(\vect{e}_n)$
%determines
determine
an equilateral $n$-gon $P'$, which is similar to $P$. 
Since the side length of $P'$ is the integer $s^2+t^2\nu$,
there exists an equilateral $n$-gon contained in $\Lambda(\nu )$ whose side length is an integer.

Let $Q$ be one of the equilateral $n$-gons contained in $\Lambda(\nu )$ whose side length is the minimal integer,
and let $\vect{v}_1, \, \ldots,  \, \vect{v}_n$ be the edge-vectors of $Q$.
Let $\vect{v}_i=(a_i, \, b_i\sqrt{\nu })$ ($a_i, \, b_i \in \mathbb{Z}$) and let $k$ be the side length of $Q$.
Since each side has length $k$, we have
\begin{equation}\label{D}
{a_i}^2+{b_i}^2 \nu =k^2
\end{equation}
for each $i \in \{1, \, 2, \, \ldots, \, n \}$.
Assume that $k$ is a multiple of $p$. 
Since $k$ and $\nu $ are multiples of $p$, it follows from equation (\ref{D}) that 
${a_i}^2$ is a multiple of $p$ for each $i$.
Since $p$ is prime, $a_i$ is a multiple of $p$.
Now, since ${a_i}^2$ and $k^2$ are multiples of $p^2$,
it follows again from equation (\ref{D}) that ${b_i}^2\nu $ is a multiple of $p^2$. 
Since $\nu $ is square-free, $\nu $ is not a multiple of $p^2$. Hence
${b_i}^2$ must be a multiple of $p$, and so is $b_i$.
Now since $a_1, \, a_2, \, \ldots, \, a_n, \, b_1, \, b_2, \, \ldots, \, b_n, \, k$ are multiples of $p$, 
$(1/p)\vect{v}_i$ is a vector in $\Lambda(\nu )$ for each $i$, 
and hence an $n$-gon determined by the edge-vectors $(1/p)\vect{v}_1, \, \ldots , \, (1/p)\vect{v}_n$
can be contained in $\Lambda(\nu)$ and has side length $k/p$, which is an integer smaller than $k$.
This contradicts the minimality of $k$. 
Therefore $k$ is not a multiple of $p$. 

Since $\nu $ is a multiple of $p$, it follows from (\ref{D}) that 
${a_i}^2 \equiv k^2 \pmod p$, and hence
$(a_i-k)(a_i+k) \equiv 0 \pmod p$.
Since $p$ is prime, either $a_i \equiv k \pmod p$ or $a_i \equiv -k \pmod p$ holds.
Now let $n'=\# \{ i \mid a_i \equiv k \pmod p \}$. 
We have $\# \{ i \mid a_i \not\equiv k \pmod p \}=n-n'$.
Since $i \in \{ i \mid a_i \not\equiv k \pmod p \}$ implies $a_i \equiv -k \pmod p$, we have 
$\sum_{i=1}^n a_i \equiv n'k +(n-n')(-k) \equiv (2n'-n) k \pmod p$.
Since $\sum_{i=1}^n \vect{v}_i = \vect{0}$, we have $\sum_{i=1}^n a_i =0$,
and hence $\sum_{i=1}^n a_i \equiv 0 \pmod p$.
Now we have $(2n'-n) k \equiv 0 \pmod p$. 
Since $k$ is not a multiple of $p$, $2n'-n$ must be a multiple of $p$.
Since $0 \leq n' \leq n$, we have $-n \leq 2n'-n \leq n$. 
Since $n$ is odd, we have $2n'-n \neq 0$. 
Therefore, the interval $[ -n, \, n]$ contains a nonzero multiple of $p$, 
which shows $n\geq p$.
\end{proof}

From Theorems~\ref{mod} and \ref{prime}, 
we know that for each odd integer $n \geq 3$, for a planar integral lattice $L$ to contain an equilateral $n$-gon 
it is necessary that $\nu(L) \equiv 3 \pmod 4$ and the largest prime factor $p$ of $\nu(L)$ satisfies $p \leq n$.
Is this condition also sufficient? 
We will prove that the answer is yes if $n <17$.

\begin{Prop}\label{examples}
Let $m$ be a square-free positive integer with $m \equiv 3 \pmod 4$ and let $p$ be the largest prime factor of $m$.
If $p < 17$, then the rectangular lattice $\Lambda(m)$ contains a convex equilateral $p$-gon. 
\end{Prop}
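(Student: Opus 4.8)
The plan is to reduce the statement, via Lemma~\ref{convex}, to a concrete Diophantine construction and then to exhibit the required vectors explicitly. By Lemma~\ref{convex} it suffices to produce $p$ distinct vectors $(a_i,b_i\sqrt m)\in\Lambda(m)$, $i=1,\dots,p$, sharing a common squared length $N=a_i^2+mb_i^2$ and satisfying $\sum_i a_i=0$ and $\sum_i b_i=0$; convexity is then automatic. Since $p$ is odd and, by Theorem~\ref{triangle}, $\Lambda(m)$ with $m\neq 3$ contains no equilateral triangle, the configuration cannot split off a three-term zero-sum (nor an antipodal pair leaving three vectors summing to $\vect 0$), so what is needed is a genuinely primitive odd zero-sum of equal-length lattice vectors.

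Guided by the proof of Theorem~\ref{prime}, I would seek the $p$ vectors in the normalized shape that argument singles out for a minimal polygon: all with $a_i\equiv k\pmod p$ for one integer $k$ with $p\nmid k$. Writing $a_i=k-p\gamma_i$ with integers $\gamma_i\ge 0$, the equal-length condition $a_i^2+mb_i^2=k^2$ becomes $(m/p)\,b_i^2=\gamma_i(2k-p\gamma_i)$, while $\sum_i a_i=0$ is exactly $\sum_i\gamma_i=k$; the remaining requirement $\sum_i b_i=0$ is to be met by choosing the signs of the $b_i$. Thus the whole problem is converted into selecting $k$ and a multiset $\{\gamma_i\}$ with these properties, each $\gamma_i(2k-p\gamma_i)$ being $(m/p)$ times a perfect square.

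The main step is to solve this system, and a clean way to force both coordinate sums to vanish is as follows. Take the multiset of $\gamma_i$ to consist of a single $0$ (contributing the vector $(k,0)$) together with $(p-1)/2$ distinct positive values $\gamma_1,\dots,\gamma_{(p-1)/2}$, each used twice, contributing the pairs $(k-p\gamma_j,\pm b_j\sqrt m)$. Then $\sum_i b_i=0$ automatically, and a short computation gives $\sum_i a_i=pk-2p\sum_j\gamma_j$, which vanishes precisely when $k$ is even and $\sum_j\gamma_j=k/2$. Hence it remains to find, for each admissible $m$, an even $k$ coprime to $p$ and distinct integers $0<\gamma_j<2k/p$ summing to $k/2$ for which each $\gamma_j(2k-p\gamma_j)/(m/p)$ is a perfect square; this makes every $b_j$ a nonzero integer and all $p$ vectors distinct. (For $p=3$ this already reproduces the triangle $(k,0),(-1,\pm1)\sqrt{\,}$-type configuration; when the rigid shape is too restrictive for a given $m$, one relaxes it to allow several $\gamma_j$ used once whose $b$-values admit a zero signed sum, still keeping $\sum_i\gamma_i=k$.)

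The hard part will be exactly this existence question: arranging that the several quadratics $\gamma_j(2k-p\gamma_j)$ are \emph{simultaneously} $(m/p)$ times squares, for which the admissible $k$ can be sizeable. For each of the finitely many $m$ whose largest prime factor $p$ lies in $\{3,5,7,11,13\}$, I would settle it by producing an explicit $k$ and explicit $\gamma_j$ — equivalently, explicit lattice points on the ellipse $a^2+mb^2=k^2$ — after which equal length, the two vanishing sums, and distinctness are verified directly and Lemma~\ref{convex} finishes the proof. This is precisely where the hypothesis $p<17$ enters: it is the range in which such $k$ and $\gamma_j$ can in fact be located, and no obstruction-free closed formula valid for all odd $p$ is being claimed.
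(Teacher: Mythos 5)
Your overall strategy is the same as the paper's: reduce via Lemma~\ref{convex} to finding $p$ distinct vectors of equal length in $\Lambda(m)$ with sum $\vect{0}$, observe that only finitely many $m$ are admissible (square-free, $\equiv 3 \pmod 4$, largest prime factor in $\{3,5,7,11,13\}$ --- there are $16$ of them), and handle each one by an explicit construction. Your reduction is correct, and your structural ansatz --- one vector $(k,0)$ plus $(p-1)/2$ reflected pairs $(k-p\gamma_j,\pm b_j\sqrt{m})$, with $\sum_j \gamma_j = k/2$ --- is in fact exactly the shape of almost all of the paper's exhibited configurations, so the parametrization via $\gamma_i(2k-p\gamma_i)=(m/p)b_i^2$ is a sensible way to organize the search.

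The genuine gap is that you never carry out the one step that constitutes the actual content of the proof: exhibiting the vectors. You write ``I would settle it by producing an explicit $k$ and explicit $\gamma_j$,'' but the existence of such $k$ and $\gamma_j$ for each of the $16$ values of $m$ is precisely the assertion being proved, and it is not a formality. The solutions are not small and cannot be guessed or found by hand --- for $m=15$ the smallest known common length is $483724$, for $m=3003$ it is $4496798$ --- and the paper obtains them only by an exhaustive computer search over unit vectors $(a/c,b\sqrt{m}/c)$ with $c\le 10^7$ (Remark~\ref{search}). Moreover your ``rigid shape'' demonstrably fails for at least one case: the paper's $m=15$ configuration is not of the symmetric one-plus-pairs form, so the relaxed variant you mention in passing is actually needed, and relaxing the template only enlarges the search space without resolving existence. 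As it stands, your argument reduces the Proposition to a simultaneous-representation problem on the ellipse $a^2+mb^2=k^2$ and then asserts, without evidence, that it is solvable whenever $p<17$; that assertion is the Proposition itself, so nothing has been proved. To complete the proof you must supply, for each admissible $m$, the explicit list of $p$ vectors (or an equivalent certificate), after which equal length, zero sum, and distinctness are finite verifications.
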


\begin{proof}
Each square-free positive integer $m$ satisfying $p < 17$ is 
expressed as $m=2^a \cdot 3^b \cdot 5^c \cdot 7^d \cdot 11^e \cdot 13^f$, where $a, b, c, d, e, f \in \{0, 1\}$.
Since $2^a\cdot 3^b \cdot 5^c \cdot 7^d \cdot 11^e \cdot 13^f \equiv
2^a\cdot (-1)^b \cdot 1^c \cdot (-1)^d \cdot (-1)^e \cdot 1^f \equiv 2^a \cdot (-1)^{b+d+e} \pmod 4$, 
$m$ of this form satisfies $m \equiv 3 \pmod 4$ if and only if $a=0$ and $b+d+e \equiv 1 \pmod 2$.  
Hence it is possible to list all of the square-free positive integers $m$ satisfying $p < 17$ as in Table~\ref{13}. 
For each of such $m$, $\Lambda(m)$ contains $p$ distinct vectors of equal length with sum $\vect{0}$, 
which are illustrated in Table~\ref{13}.
From Lemma~\ref{convex}, we conclude that $\Lambda(m)$ contains a convex equilateral $p$-gon.
\begin{table}[htbp]

\centering
\caption{square-free $m$ with $m \equiv 3 \pmod 4$ and $p < 17$, and $p$ distinct vectors of equal length in $\Lambda(m)$ with sum $\vect{0}$}

\footnotesize
\begin{tabular}{r@{\quad }l@{\ }l@{\ }l@{\ }l@{\ }l@{\quad }r@{\quad }l}
$m$ & \multicolumn{5}{l}{prime factors} & $p$ & $p$ distinct vectors of equal length in $\Lambda(m)$ with sum $\vect{0}$ \\\hline
3 & 3 &  &  &  &  & 3 & $(2, 0)$,
$(-1, \pm \sqrt{m})$ \\[0.5pt]

15 & 3, & 5 & &  &  & 5 & $(483724, 0)$,
$(445129, 48887\sqrt{m})$,
$(-379901, 77315\sqrt{m})$,\\[-1.5pt] & & & & & & & \quad 
$(-483631, -2449\sqrt{m})$,
$(-65321, -123753\sqrt{m})$\\[0.5pt]

7 &  &  & 7 &  &  &  7 & $(88, 0)$,
$(81, \pm 13\sqrt{m})$,
$(-38, \pm 30\sqrt{m})$,
$(-87, \pm 5\sqrt{m})$\\[0.5pt]

35 &  & 5, & 7 &  &  &  7 & $(17226, 0)$,
$(13194, \pm 1872\sqrt{m})$,
$(-4726, \pm 2800\sqrt{m})$,\\[-1.5pt] & & & & & & & \quad 
$(-17081, \pm 377\sqrt{m})$\\[0.5pt]

11 &  &  &  & 11 &  &  11 &  $(90, 0)$,
$(57, \pm 21\sqrt{m})$,
$(35, \pm 25\sqrt{m})$,
$(-9, \pm 27\sqrt{m})$,\\[-1.5pt] & & & & & & & \quad 
$(-42, \pm 24\sqrt{m})$,
$(-86, \pm 8\sqrt{m})$\\[0.5pt]

55 &  & 5, &  & 11 &  &  11 & $(728, 0)$,
$(717, \pm 17\sqrt{m})$,
$(552, \pm 64\sqrt{m})$,
$(-273, \pm 91\sqrt{m})$,\\[-1.5pt] & & & & & & & \quad 
$(-658, \pm 42\sqrt{m})$,
$(-702, \pm 26\sqrt{m})$\\[0.5pt]

231 & 3, &  & 7, & 11 &  &  11 & $(800, 0)$,
$(569, \pm 37\sqrt{m})$,
$(415, \pm 45\sqrt{m})$,
$(-124, \pm 52\sqrt{m})$,\\[-1.5pt] & & & & & & & \quad 
$(-520, \pm 40\sqrt{m})$,
$(-740, \pm 20\sqrt{m})$\\[0.5pt]

1155 & 3, & 5, & 7, & 11 &  & 11 &  $(22678, 0)$,
$(22447, \pm 95\sqrt{m})$,
$(3967, \pm 657\sqrt{m})$,\\[-1.5pt] & & & & & & & \quad 
$(-5273, \pm 649\sqrt{m})$,
$(-15283, \pm 493\sqrt{m})$,\\[-1.5pt] & & & & & & & \quad 
$(-17197, \pm 435\sqrt{m})$\\[0.5pt]

39 & 3, &  &  &  & 13 & 13  & $(440, 0)$,
$(401, \pm 29\sqrt{m})$,
$(310, \pm 50\sqrt{m})$,
$(154, \pm 66\sqrt{m})$,\\[-1.5pt] & & & & & & & \quad 
$(-275, \pm 55\sqrt{m})$,
$(-392, \pm 32\sqrt{m})$,
$(-418, \pm 22\sqrt{m})$\\[0.5pt]

195 & 3, & 5, &  &  & 13 & 13 & $(1666, 0)$,
$(1601, \pm 33\sqrt{m})$,
$(1406, \pm 64\sqrt{m})$,
$(119, \pm 119\sqrt{m})$,\\[-1.5pt] & & & & & & & \quad 
$(-791, \pm 105\sqrt{m})$,
$(-1519, \pm 49\sqrt{m})$,
$(-1649, \pm 17\sqrt{m})$\\[0.5pt]

91 &  &  & 7, &  & 13 & 13 & $(5890, 0)$,
$(5877, \pm 41\sqrt{m})$,
$(3875, \pm 465\sqrt{m})$,\\[-1.5pt] & & & & & & & \quad 
$(-402, \pm 616\sqrt{m})$,
$(-1767, \pm 589\sqrt{m})$,
$(-5043, \pm 319\sqrt{m})$,\\[-1.5pt] & & & & & & & \quad 
$(-5485, \pm 225\sqrt{m})$\\[0.5pt]

455 &  & 5, & 7, &  & 13 & 13 & $(4104, 0)$,
$(3519, \pm 99\sqrt{m})$,
$(3064, \pm 128\sqrt{m})$,
$(646, \pm 190\sqrt{m})$,\\[-1.5pt] & & & & & & & \quad 
$(-2214, \pm 162\sqrt{m})$,
$(-3306, \pm 114\sqrt{m})$,
$(-3761, \pm 77\sqrt{m})$\\[0.5pt]

143 &  &  &  & 11, & 13 & 13 & $(6384, 0)$,
$(4902, \pm 342\sqrt{m})$,
$(3472, \pm 448\sqrt{m})$,\\[-1.5pt] & & & & & & & \quad 
$(-532, \pm 532\sqrt{m})$,
$(-2391, \pm 495\sqrt{m})$,
$(-2534, \pm 490\sqrt{m})$,\\[-1.5pt] & & & & & & & \quad 
$(-6109, \pm 155\sqrt{m})$\\[0.5pt]

715 &  & 5, &  & 11, & 13 & 13 & $(571778, 0)$,
$(492153, \pm 10885\sqrt{m})$,
$(424943, \pm 14307\sqrt{m})$,\\[-1.5pt] & & & & & & & \quad 
$(353157, \pm 16817\sqrt{m})$,
$(-472382, \pm 12048\sqrt{m})$,\\[-1.5pt] & & & & & & & \quad 
$(-526722, \pm 8320\sqrt{m})$,
$(-557038, \pm 4824\sqrt{m})$\\[0.5pt]

3003 & 3, &  & 7, & 11, & 13 & 13 & $(4496798, 0)$,
$(4468081, \pm 9259\sqrt{m})$,
$(4017631, \pm 36859\sqrt{m})$,\\[-1.5pt] & & & & & & & \quad 
$(-1091473, \pm 79605\sqrt{m})$,
$(-1638877, \pm 76415\sqrt{m})$,\\[-1.5pt] & & & & & & & \quad 
$(-3823202, \pm 43200\sqrt{m})$,
$(-4180559, \pm 30229\sqrt{m})$\\[0.5pt]

15015 & 3, & 5, & 7, & 11, & 13 & 13 &$(456688, 0)$,
$(446522, \pm 782\sqrt{m})$,
$(323828, \pm 2628\sqrt{m})$,\\[-1.5pt] & & & & & & & \quad 
$(21097, \pm 3723\sqrt{m})$,
$(-198122, \pm 3358\sqrt{m})$,\\[-1.5pt] & & & & & & & \quad 
$(-373297, \pm 2147\sqrt{m})$,
$(-448372, \pm 708\sqrt{m})$

\end{tabular}\
\label{13}
\end{table}
\end{proof}

\begin{Rem}\label{search}
The vectors illustrated in Table~\ref{13} can be found
in a quite simple search, using a computer, which we explain here.
To obtain $p$ distinct vectors of equal length contained in $\Lambda(m)$ with sum $\vect{0}$,  
first we search for $p$ distinct unit vectors contained in 
$\mathbb{Q} \times \sqrt{m}\mathbb{Q} = \{ (s, \, t\sqrt{m}) \mid s, \, t \in \mathbb{Q} \}$
with sum $\vect{0}$. 
For each positive integer $N$, let $U(m)_N$ be the set $\{ ({a}/{c}, {b\sqrt{m}}/{c}) \mid a, b, c \in \mathbb{Z}, 1 \leq c \leq N, a^2+b^2m=c^2 \}$, which is consisting of unit vectors in $\mathbb{Q}\times \sqrt{m}\mathbb{Q}$. 
For a fixed $N$, we use a computer to search for $p$ distinct vectors in $U(m)_N$
whose sum is $\vect{0}$. (An exhaustive search is possible since $U(m)_N$ is a finite set.)
By setting $N$ by $N=10^7$, we can find desired unit vectors. 
By multiplying them by an appropriate integer, 
we obtain $p$ distinct vectors of equal length in $\Lambda(m)$ with sum $\vect{0}$. 
\end{Rem}

Since $\nu(\Lambda(m))=m$ for a square-free positive integer $m$, 
Proposition~\ref{examples} answers Problem~\ref{MaeharaProblem} in the affirmative. 
It
also gives us the following corollary. 

\begin{Cor}\label{exist}
If a planar integral lattice $L$ satisfies $\nu(L) \equiv 3 \pmod 4$ and the largest prime factor $p$ of $\nu(L)$ satisfies 
$p < 17$, then $L$ contains a convex equilateral $n$-gon for every integer $n$ with $n \geq p$.   
\end{Cor}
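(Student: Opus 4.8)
The plan is to reduce everything to the rectangular lattice $\Lambda(m)$ with $m=\nu(L)$ and then combine the seed $p$-gon supplied by Proposition~\ref{examples} with the parity-preserving step of Lemma~\ref{Lemma4.1} and the even-case result of Theorem~\ref{mod}.

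First I would set $m=\nu(L)$. By the definition of $\nu$, the integer $m$ is a square-free positive integer, and by hypothesis $m\equiv 3\pmod 4$ with largest prime factor $p<17$; in particular $m$ is odd, so $p$ is an odd prime (hence $p\ge 3$). Proposition~\ref{examples} then produces a convex equilateral $p$-gon contained in $\Lambda(m)$, which will serve as the seed polygon for the induction.

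Next I would split into the two parities of $n$. For odd $n\ge p$: since $p$ is odd and $n\equiv p\pmod 2$, the quantity $(n-p)/2$ is a nonnegative integer, and I would apply Lemma~\ref{Lemma4.1} exactly $(n-p)/2$ times, starting from the convex equilateral $p$-gon in $\Lambda(m)$, to obtain a convex equilateral $n$-gon in $\Lambda(m)$. Because $m=\nu(L)$, Lemma~\ref{similar} transfers this conclusion back to $L$ itself: $L$ contains a convex equilateral $n$-gon. For even $n\ge p$: since $p$ is an odd integer with $p\ge 3$, the smallest even integer that is $\ge p$ is $p+1\ge 4$, so every even $n\ge p$ automatically satisfies $n\ge 4$; Theorem~\ref{mod} then gives a convex equilateral $n$-gon directly in the planar integral lattice $L$, with no further work.

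Combining the two cases yields a convex equilateral $n$-gon in $L$ for every integer $n\ge p$, which is exactly the assertion of Corollary~\ref{exist}. The argument is essentially bookkeeping on top of the earlier results: the only point that needs a moment of care is checking that the even case of Theorem~\ref{mod}, which is stated for all even $n\ge 4$, covers precisely the even integers $n\ge p$, and this holds because $p\ge 3$ is odd. Thus there is no genuine obstacle once Proposition~\ref{examples} is available; the substantive content has already been absorbed into that proposition and into Lemma~\ref{Lemma4.1}.
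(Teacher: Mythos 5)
Your proposal is correct and follows essentially the same route as the paper: seed with the convex equilateral $p$-gon from Proposition~\ref{examples}, extend to all odd $n\geq p$ via Lemma~\ref{Lemma4.1}, transfer between $\Lambda(\nu(L))$ and $L$ by Lemma~\ref{similar}, and handle even $n$ by Theorem~\ref{mod}. Your version is in fact slightly more careful in applying Lemma~\ref{Lemma4.1} inside $\Lambda(m)$ before transferring to $L$, and in checking explicitly that every even $n\geq p$ satisfies $n\geq 4$.
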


\begin{proof}
Suppose that a planar integral lattice $L$ satisfies $\nu(L) \equiv 3 \pmod 4$ and the largest prime factor $p$ of $\nu(L)$ satisfies
$p < 17$. 
Then, from Proposition~\ref{examples}, $\Lambda(\nu(L))$ contains a convex equilateral $p$-gon. 
%Hence by Lemma~\ref{similar}, $L$ contains a convex equilateral $p$-gon.
%By Lemma~\ref{Lemma4.1}, 
Hence by Lemma~\ref{Lemma4.1}, $\Lambda(\nu(L))$ contains a convex equilateral $n$-gon for every odd $n \geq p$.
By Lemma~\ref{similar},
$L$ contains a convex equilateral $n$-gon for every odd $n \geq p$.
Since $L$ contains a convex equilateral $n$-gon for every even $n$ by Theorem~\ref{mod}, 
the assertion follows. 
\end{proof}

Now we can prove the following theorem, which generalizes Theorem~\ref{triangle}. 

\begin{Thm}\label{generalized1.4} Let $n$ be an odd integer with $3 \leq n <17$. 
For a planar lattice $L$, the following three are equivalent. 
\begin{enumerate}
\item[(i)] $L$ contains an equilateral $n$-gon. 
\item[(ii)] $L$ contains a convex equilateral $k$-gon for every $k \geq n$. 
\item[(iii)] $L$ is similar to an integral lattice $L'$ such that $\nu(L')\equiv 3 \pmod 4$ and the largest prime factor $p$ of $\nu(L')$ satisfies $p \leq n$. 
\end{enumerate}
\end{Thm}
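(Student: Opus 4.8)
The plan is to establish the cycle of implications (i) $\Rightarrow$ (iii) $\Rightarrow$ (ii) $\Rightarrow$ (i), which yields the three-way equivalence. The implication (ii) $\Rightarrow$ (i) is immediate: specializing (ii) to $k=n$ produces a convex equilateral $n$-gon, which is in particular an equilateral $n$-gon. For (iii) $\Rightarrow$ (ii), I would suppose $L$ is similar to an integral lattice $L'$ with $\nu(L')\equiv 3\pmod 4$ whose largest prime factor $p$ satisfies $p\le n$. Since $n<17$ we get $p<17$, so Corollary~\ref{exist} applies to $L'$ and shows that $L'$ contains a convex equilateral $k$-gon for every $k\ge p$. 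Because containing a convex equilateral $k$-gon is preserved under similarity, $L$ contains a convex equilateral $k$-gon for every $k\ge p$, and in particular for every $k\ge n$ since $n\ge p$. This is exactly (ii); note that the hypothesis $n<17$ enters only here, through Corollary~\ref{exist}.

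The heart of the argument is (i) $\Rightarrow$ (iii), and the first task is to show that $L$ is similar to an integral lattice. Let $\vect{z}_1,\dots,\vect{z}_n$ be the edge-vectors of an equilateral $n$-gon in $L$, so they have a common length $\ell$ and $\vect{z}_1+\cdots+\vect{z}_n=\vect{0}$; since the polygon is non-degenerate, they are not all parallel. The key point is a parity argument: the $\vect{z}_k$ cannot all lie in two antipodal directions $\{\pm\vect{v}_1,\pm\vect{v}_2\}$ with $\vect{v}_1,\vect{v}_2$ linearly independent, for if $\vect{v}_1,-\vect{v}_1$ occur $a,b$ times and $\vect{v}_2,-\vect{v}_2$ occur $c,d$ times among the edge-vectors, then $\vect{0}=(a-b)\vect{v}_1+(c-d)\vect{v}_2$ forces $a=b$ and $c=d$, whence $n=2(a+c)$ is even, contradicting that $n$ is odd. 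Hence at least three pairwise non-parallel directions occur among the $\vect{z}_k$; choosing length-$\ell$ vectors $\vect{v}_1,\vect{v}_2,\vect{v}_3$ in three such directions, the six distinct vectors $\pm\vect{v}_1,\pm\vect{v}_2,\pm\vect{v}_3$ all lie in $L$ (using that $L$ is symmetric), have equal length, and sum to $\vect{0}$. By Lemma~\ref{convex}, $L$ then contains a convex equilateral hexagon, and since $6\ne 4$, Theorem~\ref{integral} shows that $L$ is similar to some integral lattice $L'$.

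It remains to identify the arithmetic of $\nu(L')$. Since a similarity carries the equilateral $n$-gon of $L$ into one of $L'$, the integral lattice $L'$ contains an equilateral $n$-gon with $n$ odd; Theorem~\ref{mod} then gives $\nu(L')\equiv 3\pmod 4$, and Theorem~\ref{prime} gives $p\le n$ for every prime factor $p$ of $\nu(L')$, in particular for the largest one, which establishes (iii). I expect the main obstacle to be precisely the passage from a \emph{possibly non-convex} equilateral odd-gon to a configuration to which Theorem~\ref{integral} applies: the parity observation, which upgrades an odd-gon to a convex hexagon, is the crucial step, after which the remaining conclusions assemble directly from Theorems~\ref{mod} and~\ref{prime} in the forward direction and from Corollary~\ref{exist} in the converse.
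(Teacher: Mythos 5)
Your proof is correct and follows the same overall architecture as the paper's: the cycle (ii) $\Rightarrow$ (i) trivially, (iii) $\Rightarrow$ (ii) via Corollary~\ref{exist} together with similarity-invariance, and (i) $\Rightarrow$ (iii) via Theorems~\ref{integral}, \ref{mod} and \ref{prime}. The one place you genuinely diverge is inside (i) $\Rightarrow$ (iii): the paper applies Theorem~\ref{integral} directly to the equilateral $n$-gon furnished by (i), even though Theorem~\ref{integral} as quoted concerns \emph{convex} equilateral polygons and (i) does not guarantee convexity. You noticed this mismatch and supplied a bridge: since $n$ is odd, the edge-vectors cannot all lie in at most two antipodal direction pairs (the sum-zero condition would force an even total count), so there are three pairwise non-parallel edge-vectors $\vect{v}_1,\vect{v}_2,\vect{v}_3$ of common length, and the six distinct vectors $\pm\vect{v}_1,\pm\vect{v}_2,\pm\vect{v}_3$ lie in $L$, have equal length, and sum to $\vect{0}$, so Lemma~\ref{convex} produces a convex equilateral hexagon and Theorem~\ref{integral} applies with $6\neq 4$. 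This parity argument is sound, and it makes the deduction self-contained where the paper implicitly relies on a non-convex reading of Maehara's Theorem 5.1; the cost is only a few extra lines. Everything else matches the paper step for step.
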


\begin{proof}
$\textrm{(ii)} \Rightarrow \textrm{(i)}$: Obvious. \\
$\textrm{(i)} \Rightarrow \textrm{(iii)}$: Suppose that (i) holds. 
%By Theorem~\ref{integral}, 
Let 
$\vect{v}_1, \dots, \vect{v}_n$ be the edge-vectors of an equilateral $n$-gon contained in $L$. 
Suppose 
$\{ \vect{v}_1, \dots, \vect{v}_n \} \subset \{ \pm \vect{v}_1,  \pm \vect{v}_2\}$. 
Since $\vect{v}_1+\dots +\vect{v}_n=\vect{0}$, we have
$a\vect{v}_1+b(-\vect{v}_1)+c\vect{v}_2+d(-\vect{v}_2)=\vect{0}$ for some 
$a, b, c, d \in \{ 0, 1, \dots , n \}$ with $a+b+c+d=n$. 
Since $\vect{v}_1$ and $\vect{v}_2$ are linearly independent,
we have $a-b=c-d=0$, and hence $a+b+c+d$ is even,
which contradicts that $n$ is odd. 
Therefore, there exists $t \in \{3, \dots, n\}$ such that $\vect{v}_t \notin \{ \pm \vect{v}_1,  \pm \vect{v}_2\}$. 
Now $L$ contains six distinct vectors $\pm \vect{v}_1, \pm \vect{v}_2, \pm \vect{v}_t$
of equal length with sum $\vect{0}$, and hence $L$ contains a convex equilateral hexagon by Lemma~\ref{convex}.
Hence
by Theorem~\ref{integral}, 
$L$ is similar to an integral lattice $L'$. 
Since $L'$ contains an equilateral $n$-gon, $\nu(L')\equiv 3 \pmod 4$ from Theorem~\ref{mod}, and
the largest prime factor $p$ of $\nu(L')$ satisfies $p \leq n$ from Theorem \ref{prime}. \\
$\textrm{(iii)} \Rightarrow \textrm{(ii)}$:  Suppose that (iii) holds. Since $L'$ satisfies $\nu(L') \equiv 3 \pmod 4$ and the largest prime factor $p$ of $\nu(L')$
satisfies $p \leq n (<17)$, $L'$ contains a convex equilateral $k$-gon for every $k \geq p$ from Corollary~\ref{exist}. Since $p \leq n$, 
$L'$ contains a convex equilateral $k$-gon for every $k\geq n$. Since $L$ is similar to $L'$, $L$ contains a convex equilateral $k$-gon for every $k\geq n$. 
\end{proof}

Theorem~\ref{generalized1.4} answers Problem~\ref{oddgon} in the case where $n < 17$. 

\begin{Rem}
Proposition~\ref{examples} remains true if we replace ``If $p<17$'' by ``If $p<29$''.
To confirm this fact, it is sufficient to 
search for $p$ distinct unit vectors in $\mathbb{Q} \times \sqrt{m}\mathbb{Q}$ with sum $\vect{0}$
as in Remark~\ref{search} with setting $N$ by $N=10^{14}$. 
(Table~\ref{23} illustrates $p$ distinct vectors of equal length in $\Lambda(m)$ with sum $\vect{0}$ for some $m$.)
From this, we can see that Theorem~\ref{generalized1.4} holds for odd $n$ with $3 \leq n < 29$, which answers Problem~\ref{oddgon} in the case where $n < 29$. 
\begin{table}[htbp]
\centering
\caption{$p$ distinct vectors of equal length in $\Lambda(m)$ with sum $\vect{0}$ for some $m$ with $m\equiv 3 \pmod 4$ and $p<29$}
\footnotesize

\begin{tabular}{r@{\quad }l@{\ }l@{\ }l@{\ }l@{\ }l@{\ }l@{\ }l@{\ }l@{\quad }r@{\quad }l}
$m$ & \multicolumn{8}{l}{prime factors} & $p$ & $p$ distinct vectors of equal length in $\Lambda(m)$ with sum $\vect{0}$ \\\hline

51 & 3, &  &  &  &  & 17 & &  & 17 &
$(1430, 0)$,
$(1243, \pm 99\sqrt{m})$,
$(971, \pm 147\sqrt{m})$,\\[-1.5pt] & & & & & & & & & & \quad 
$(325, \pm 195\sqrt{m})$,
$(70, \pm 200\sqrt{m})$,\\[-1.5pt] & & & & & & & & & & \quad 
$(-406, \pm 192\sqrt{m})$,
$(-695, \pm 175\sqrt{m})$,\\[-1.5pt] & & & & & & & & & & \quad 
$(-1001, \pm 143\sqrt{m})$,
$(-1222, \pm 104\sqrt{m})$\\[0.5pt]

255255 & 3, & 5, & 7, & 11, & 13, & 17 & &  & 17 & $(41516416, 0)$,
$(37725824, \pm 34304\sqrt{m})$,\\[-1.5pt] & & & & & & & & & & \quad 
$(20300416, \pm 71680\sqrt{m})$,\\[-1.5pt] & & & & & & & & & & \quad 
$(16021856, \pm 75808\sqrt{m})$,\\[-1.5pt] & & & & & & & & & & \quad 
$(13864624, \pm 77456\sqrt{m})$,\\[-1.5pt] & & & & & & & & & & \quad 
$(7027156, \pm 80988\sqrt{m})$,\\[-1.5pt] & & & & & & & & & & \quad 
$(-34068899, \pm 46961\sqrt{m})$,\\[-1.5pt] & & & & & & & & & & \quad 
$(-40195019, \pm 20567\sqrt{m})$,\\[-1.5pt] & & & & & & & & & & \quad 
$(-41434166, \pm 5170\sqrt{m})$\\[0.5pt]

19 &  &  &  &  & & & 19  & &  19 & $(770, 0)$,
$(751, \pm 39\sqrt{m})$,
$(675, \pm 85\sqrt{m})$,\\[-1.5pt] & & & & & & & & & & \quad 
$(561, \pm 121\sqrt{m})$,
$(238, \pm 168\sqrt{m})$,\\[-1.5pt] & & & & & & & & & & \quad 
$(-275, \pm 165\sqrt{m})$, 
$(-427, \pm 147\sqrt{m})$,\\[-1.5pt] & & & & & & & & & & \quad 
$(-446, \pm 144\sqrt{m})$,
$(-693, \pm 77\sqrt{m})$,\\[-1.5pt] & & & & & & & & & & \quad 
$(-769, \pm 9\sqrt{m})$\\[0.5pt]

1616615 &  & 5, & 7, & 11, & 13, & 17, & 19  & &  19 & $(1306809216, 0)$,
$(1306710264, \pm 12648\sqrt{m})$,\\[-1.5pt] & & & & & & & & & & \quad 
$(1099481216, \pm 555520\sqrt{m})$,\\[-1.5pt] & & & & & & & & & & \quad 
$(971630064, \pm 687312\sqrt{m})$,\\[-1.5pt] & & & & & & & & & & \quad 
$(43693054, \pm 1027226\sqrt{m})$,\\[-1.5pt] & & & & & & & & & & \quad 
$(-138916611, \pm 1021977\sqrt{m})$,\\[-1.5pt] & & & & & & & & & & \quad 
$(-694638111, \pm 870573\sqrt{m})$,\\[-1.5pt] & & & & & & & & & & \quad 
$(-863551284, \pm 771420\sqrt{m})$,\\[-1.5pt] & & & & & & & & & & \quad 
$(-1071771296, \pm 588064\sqrt{m})$,\\[-1.5pt] & & & & & & & & & & \quad 
$(-1306041904, \pm 35216\sqrt{m})$\\[0.5pt]

23 & & & & & & &  & 23 & 23 & 
$(1872, 0)$,
$(1826, \pm 86\sqrt{m})$,
$(1803, \pm 105\sqrt{m})$,\\[-1.5pt] & & & & & & & & & & \quad 
$(1044, \pm 324\sqrt{m})$,
$(676, \pm 364\sqrt{m})$,\\[-1.5pt] & & & & & & & & & & \quad 
$(78, \pm 390\sqrt{m})$,
$(-336, \pm 384\sqrt{m})$,\\[-1.5pt] & & & & & & & & & & \quad 
$(-819, \pm 351\sqrt{m})$,
$(-911, \pm 341\sqrt{m})$,\\[-1.5pt] & & & & & & & & & & \quad 
$(-1072, \pm 320\sqrt{m})$,
$(-1509, \pm 231\sqrt{m})$,\\[-1.5pt] & & & & & & & & & & \quad 
$(-1716, \pm 156\sqrt{m})$\\[0.5pt]

111546435 & 3, & 5, & 7, & 11, & 13, & 17, & 19, & 23 & 23 & 
$(46923183273602, 0)$,\\[-1.5pt] & & & & & & & & & & \quad 
$(46918331222398, \pm 63889920\sqrt{m})$,\\[-1.5pt] & & & & & & & & & & \quad 
$(46179809577838, \pm 787694056\sqrt{m})$,\\[-1.5pt] & & & & & & & & & & \quad 
$(45535252265413, \pm 1072579179\sqrt{m})$,\\[-1.5pt] & & & & & & & & & & \quad 
$(32540987084087, \pm 3200887671\sqrt{m})$,\\[-1.5pt] & & & & & & & & & & \quad 
$(1360169729983, \pm 4440962273\sqrt{m})$,\\[-1.5pt] & & & & & & & & & & \quad 
$(-18098964883063, \pm 4099034551\sqrt{m})$,\\[-1.5pt] & & & & & & & & & & \quad 
$(-21535244754542, \pm 3947292712\sqrt{m})$,\\[-1.5pt] & & & & & & & & & & \quad 
$(-33327326200258, \pm 3127514608\sqrt{m})$,\\[-1.5pt] & & & & & & & & & & \quad 
$(-36400586649517, \pm 2803613027\sqrt{m})$,\\[-1.5pt] & & & & & & & & & & \quad 
$(-41444796116242, \pm 2083271992\sqrt{m})$,\\[-1.5pt] & & & & & & & & & & \quad 
$(-45189222912898, \pm 1196604960\sqrt{m})$

\end{tabular}\
  \label{23}
\end{table}
\end{Rem}

For general $n$, Problem~\ref{oddgon} is unsolved.

\subsection*{Acknowledgement}
The authors would like to thank Hiroshi Maehara and the referee for their helpful advice.

\end{document}